\newtheorem {theorem}{Theorem}[section]
\newtheorem {corollary}{Corollary}[section]
\newtheorem {proposition}{Proposition}[section]
\newtheorem {lemma}{Lemma}[section]
\newtheorem {example}{Example}[section]
\newtheorem {definition}{Definition}[section]
\newtheorem {remark}{Remark}[section]
\def\ex{\begin{example}
  }
  \def\eex{\end{example}}
\def\thr{\begin{theorem}}
\def\ethr{\end{theorem}}
\def\pro{\begin{proposition}}
\def\epro{\end{proposition}}
\def\coro{\begin{corollary}}
\def\ecoro{\end{corollary}}
\def\df{\begin{definition}}
\def\edf{\end{definition}}
\def\lm{\begin{lemma}}
\def\elm{\end{lemma}}
\def\pf{\begin{proof}}
\def\epf{\end{proof}}
\def\problem{\begin{problem}}
\def\eproblem{\end{problem}}
\def\ite{\begin{itemize}}
\def\hite{\end{itemize}}
\def\rem{\begin{remark}}
\def\erem{\end{remark}}
\def\cla{\begin{claim}}
\def\ecla{\end{claim}}
\def\Ker{\mbox{Ker}}
\newcommand{\seq}[1]{\left<#1\right>}
\def\ees{{\accent"5E e}\kern-.385em\raise.2ex\hbox{\char'23}\kern-.08em}
\def\EES{{\accent"5E E}\kern-.5em\raise.8ex\hbox{\char'23 }}
\def\ow{o\kern-.42em\raise.82ex\hbox{
\vrule width .12em height .0ex depth .075ex \kern-0.16em \char'56}\kern-.07em}
\def\OW{O\kern-.460em\raise1.36ex\hbox{
\vrule width .13em height .0ex depth .075ex \kern-0.16em \char'56}\kern-.07em}
\def\uw{u\kern-.460em\raise1.36ex\hbox{
\vrule width .13em height .0ex depth .075ex \kern-0.16em \char'56}\kern-.07em}
\title[]{HANDELMAN'S POSITIVSTELLENSATZ FOR POLYNOMIAL MATRICES POSITIVE DEFINITE ON POLYHEDRA}
\author{L\^{e} C\^{o}ng-Tr\`{i}nh $^\dagger$}
\address{Department of Mathematics, Quy Nhon University, 170 An Duong Vuong, Quy Nhon, Binh Dinh, Vietnam}
\email{lecongtrinh@qnu.edu.vn}
\author{D\uw ~Th\d{i}-H\`{o}a-B\`{i}nh}
\address{Department of Natural Sciences, Ha Tay College of Pedagogy, Ha Noi, Vietnam}
\email{hoabinhcdsp@gmail.com}
\keywords{Handelman's theorem, P\'{o}lya's theorem,  Schm\"udgen's theorem,  Matrix polynomial,  Polynomial matrix,  Positivstellensatz,   Positive definite,  Standard simplex, Polyhedron}
\subjclass[2010]{ 14P99,  14Q99, 14P10,  52B99,   15B48}
\date{ \today}
\begin{document}
\maketitle

\begin{abstract} 
In this paper we give a matrix version of Handelman's Positivstellensatz \cite{Ha}, representing polynomial matrices which are   positive definite on convex, compact polyhedra.   Moreover, we propose also  a procedure to find such a  representation. As a corollary of Handelman's theorem, we give a special case of Schm\"udgen's Positivstellensatz for polynomial matrices positive definite on  convex, compact polyhedra.
\end{abstract}
\tableofcontents
\section{Introduction}

Let $\mathbb R[X]:=\mathbb R[X_1,\cdots,X_n]$ be the ring of polynomials in the variables $X_1,\cdots,X_n$ with real coefficients.  Denote by $\Delta_n$ the standard $n$-simplex in $\mathbb R^n$, which is defined by 
$$ \Delta_n:=\{x=(x_1,\cdots,x_n)\in \mathbb R^n| x_i\geq 0, \sum_{i=1}^n x_i=1\}.$$

P\'{o}lya \cite{Po} proved in 1928 that for a homogeneous  polynomial $f\in \mathbb R[X]$, \textit{if $f(x)>0$ for every $x\in \Delta_n$, then there exists a sufficiently large number $N$ such that all coefficients of the polynomial $(X_1+\cdots+X_n)^N \cdot f$ are positive}. 

Powers and Reznick \cite{PR}   gave  an explicit bound for  the number $N$, and applied it to give a constructive version of Handelman's Positivstellensatz.  More explicitly, let $P\subseteq \mathbb R^n$ be a convex, compact polyhedron with non-empty interior, bounded by linear polynomials $L_1, \cdots, L_m\in \mathbb R[X]$. By choosing the sign of the $L_i$'s, we may assume that 
\begin{equation}\label{polyhedron}
P=\{x\in \mathbb R^n| L_i(x)\geq 0, ~ i=1,\cdots,m\}.
\end{equation}

\begin{theorem}[{Handelman's  Positivstellensatz, \cite{Ha}}] \label{thr-Handelman} For a polynomial $f\in \mathbb R[X]$, if $f(x)>0$ for all $x\in P$, then $f$ can be represented as 
$$f = \sum_{|\alpha|\leq M} f_\alpha L_1^{\alpha_1}\cdots L_m^{\alpha_m} $$
 for some $M\in \mathbb N$ and $f_\alpha\geq 0$ for all $\alpha=(\alpha_1,\cdots,\alpha_m) \in \mathbb N^m$ such that $|\alpha|\leq M$.
\end{theorem}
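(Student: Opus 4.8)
The plan is to derive Handelman's theorem from P\'olya's theorem, following the strategy of Powers and Reznick \cite{PR}. \emph{Step 1 (normalization).} Because $P$ is compact with nonempty interior its recession cone is $\{0\}$, so the linear parts of $L_1,\dots,L_m$ positively span $\mathbb R^n$; by a Stiemke/Gordan alternative there exist $a_1,\dots,a_m>0$ for which $\sum_i a_iL_i$ is a constant, and this constant is positive since $L_i>0$ at an interior point of $P$. Replacing each $L_i$ by a positive multiple — which only rescales the sought coefficients $f_\alpha$ by positive factors — we may assume $\sum_{i=1}^m L_i\equiv 1$. Then the affine map $\lambda:=(L_1,\dots,L_m):\mathbb R^n\to\mathbb R^m$ is injective, its image is an affine subspace $V\subseteq H:=\{Y:\sum_iY_i=1\}$, and $\lambda(P)=V\cap\Delta_m$, where $\Delta_m$ denotes the standard simplex in $\mathbb R^m$.

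\emph{Step 2 (global positive extension).} Transporting $f$ through the affine isomorphism $\lambda$ gives a polynomial $g$ on $V$ that is positive on the compact polytope $V\cap\Delta_m$. I would take any polynomial $\tilde F\in\mathbb R[Y_1,\dots,Y_m]$ restricting to $g$ on $V$, pick affine forms $q_1,\dots,q_r$ with $V=\{q_1=\dots=q_r=0\}$, and set $F:=\tilde F+c\,(q_1^2+\dots+q_r^2)$. For $c$ large enough, $F>0$ on all of $\Delta_m$: on $V\cap\Delta_m$ we have $F=g>0$; on a neighborhood of $V\cap\Delta_m$ in $\Delta_m$ the term $\tilde F$ is still positive by continuity and compactness; and on the part of $\Delta_m$ away from $V$ the sum of squares is bounded below by a positive constant while $\tilde F$ is bounded below. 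Since the $q_j$ vanish on $V\supseteq\lambda(\mathbb R^n)$, we still have $F\circ\lambda=f$.

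\emph{Step 3 (P\'olya and back-substitution).} Multiplying each monomial of $F$ of degree $<d:=\deg F$ by the appropriate power of $Y_1+\dots+Y_m$ produces a homogeneous polynomial $G$ of degree $d$ with $G\equiv F$ on $H$, hence $G>0$ on $\Delta_m$. P\'olya's theorem then yields $N\in\mathbb N$ with $(Y_1+\dots+Y_m)^N G=\sum_{|\beta|=N+d}b_\beta Y^\beta$ and all $b_\beta\ge 0$. Substituting $Y_i=L_i(x)$ and using $\sum_iL_i\equiv 1$ collapses the left-hand side to $G(\lambda(x))=F(\lambda(x))=f(x)$, so $f=\sum_{|\beta|=N+d}b_\beta L_1^{\beta_1}\cdots L_m^{\beta_m}$; undoing the rescaling of Step 1 and allowing zero coefficients for $|\alpha|<N+d$ gives the stated representation with $M=N+d$.

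I expect the main obstacle to be Step 2. The hypothesis $f>0$ on $P$ controls the extension only on the slice $V\cap\Delta_m$, and one must raise it to a polynomial positive on the whole simplex without disturbing its values along $V$, since exactly those values encode $f$; the sum-of-squares correction term together with the compactness of $\Delta_m$ is what makes this possible. Step 1 is equally indispensable: the normalization $\sum L_i\equiv 1$ is what turns P\'olya's identity on the simplex into a genuine polynomial identity among products of the $L_i$ — without it one would only be working on an orthant, not a simplex.
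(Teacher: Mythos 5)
Your proof is correct and follows exactly the Powers--Reznick strategy (\cite{PR}) that the paper itself cites for this theorem and then mirrors in its matrix-valued main result (Theorem~\ref{thr:main}): normalize so that $\sum_i L_i\equiv 1$, lift $f$ through $\varphi:Y_i\mapsto L_i$, force positivity on all of $\Delta_m$ by adding a large multiple of a sum of squares generating the ideal $\Ker(\varphi)=I(V)$, homogenize, apply P\'olya, and substitute $Y_i=L_i$ back. The paper states Theorem~\ref{thr-Handelman} as a cited result rather than reproving it, but Section~3 (Lemmas~\ref{lm:surjective}--\ref{lm:positive-on-simplex} and the proof of Theorem~\ref{thr:main}) carries out precisely the steps you describe, with the only cosmetic difference being that the paper uses a Gr\"obner basis $\{R_1,\dots,R_s\}$ of $\Ker(\varphi)$ and the specific homogeneous lift (\ref{pt:tilde}) in place of your affine forms $q_j$ and an arbitrary lift $\tilde F$.
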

 J.L. Krivine \cite{K}   proved Handelman's Positivstellensatz for a special polyhedron. Moreover, one can find a   generalization of this Positivstellensatz in \cite[Theorem 5.4.6]{PreD} (or \cite[Theorem 7.1.6]{M}).  A bound for the number $M$  was   given by Powers and Reznick \cite{PR}, using the bound for the number $N$ in P\'{o}lya's Positivstellensatz. 

Theorem \ref{thr-Handelman} yields the following consequence.

\begin{corollary} For a polynomial $f\in \mathbb R[X]$, if $f(x)>0$ for all $x\in P$, then $f$ can be represented as 
$$f = \sum_{e=(e_1,\cdots,e_m)\in \{0,1\}^m}  f^2_e L_1^{e_1}\cdots L_m^{e_m}, $$
 where   $f_e\in \mathbb R[X]$  and $\deg(f_e^2) \leq M$.
\end{corollary}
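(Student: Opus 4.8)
The plan is to deduce the corollary directly from Handelman's Positivstellensatz (Theorem \ref{thr-Handelman}) by turning its monomial expansion into a sum of squares weighted by squarefree products of the $L_i$, via reduction of exponents modulo $2$. First I would apply Theorem \ref{thr-Handelman} to write
\[ f=\sum_{|\alpha|\le M} f_\alpha\, L_1^{\alpha_1}\cdots L_m^{\alpha_m},\qquad f_\alpha\ge 0 \ \ (|\alpha|\le M), \]
for a suitable $M\in\mathbb N$. Since each coefficient $f_\alpha$ is a nonnegative real number it has a real square root, so $f_\alpha=c_\alpha^{2}$ with $c_\alpha:=\sqrt{f_\alpha}\ge 0$.

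Next, for each multi-index $\alpha$ occurring in the sum I would divide every component by $2$: write $\alpha_i=2\beta_i+e_i$ with $\beta_i\in\mathbb N$ and $e_i\in\{0,1\}$, and let $\beta=(\beta_1,\dots,\beta_m)$ and $e=(e_1,\dots,e_m)\in\{0,1\}^m$ (so $e$ records the parities of $\alpha$). Then
\[ f_\alpha\, L_1^{\alpha_1}\cdots L_m^{\alpha_m}=\bigl(c_\alpha\, L_1^{\beta_1}\cdots L_m^{\beta_m}\bigr)^{2}\; L_1^{e_1}\cdots L_m^{e_m}, \]
so each Handelman term is a square polynomial times a product $L_1^{e_1}\cdots L_m^{e_m}$ of distinct $L_i$'s. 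I would then regroup the whole sum by the value of $e$: for each $e\in\{0,1\}^m$ collect all $\alpha$ with $|\alpha|\le M$ and parity vector $e$, which produces exactly
\[ f=\sum_{e\in\{0,1\}^m} f_e^{2}\, L_1^{e_1}\cdots L_m^{e_m},\qquad f_e^{2}:=\sum_{\alpha}\bigl(c_\alpha\, L_1^{\beta_1}\cdots L_m^{\beta_m}\bigr)^{2}, \]
the inner sum running over those $\alpha$ with parity vector $e$. For the degree bound, each Handelman term satisfies $\deg\!\bigl(f_\alpha L_1^{\alpha_1}\cdots L_m^{\alpha_m}\bigr)\le|\alpha|\le M$ since the $L_i$ are linear; as splitting off $L_1^{e_1}\cdots L_m^{e_m}$ (a nonzero factor) only lowers the degree, each square $\bigl(c_\alpha L_1^{\beta_1}\cdots L_m^{\beta_m}\bigr)^{2}$ has degree $\le M$, whence $\deg(f_e^{2})\le M$.

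I do not expect a genuine obstacle here: the corollary is essentially a bookkeeping reformulation of Theorem \ref{thr-Handelman}, and the only steps that need care are the passage from a nonnegative coefficient to a real square and the degree tracking through the parity splitting (which is what pins the bound at $M$ rather than something smaller). The one point of interpretation is that $f_e^{2}$ is naturally a \emph{sum} of squares of polynomials of degree $\le M/2$ rather than literally the square of a single $f_e\in\mathbb R[X]$; this is harmless, and in the vector/matrix notation of the following sections a finite sum of squares is genuinely a single square, which is precisely why this scalar corollary is the right shadow of the paper's matrix Positivstellensatz.
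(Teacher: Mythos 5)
Your proof is correct and is the standard derivation the paper has in mind (the paper states this corollary without proof, but your argument is exactly the scalar shadow of the paper's proof of its matrix-version corollary at the end, which writes each positive definite constant coefficient as $\bold{G}_\alpha^T\bold{G}_\alpha$ and then implicitly performs the same parity splitting of the exponent vector $\alpha$). You have also correctly spotted the one real wrinkle: what you obtain is a \emph{sum} of squares in front of each squarefree product $L_1^{e_1}\cdots L_m^{e_m}$, whereas the paper's notation $f_e^2$ reads as a single square, which is not in general achievable in $\mathbb R[X]$ for $n>1$; the paper's statement (and its matrix analogue, where stacking blocks to form a single $\bold{F}_e^T\bold{F}_e$ would leave $\mathcal{M}_t$) is simply imprecise on this point, and the sum-of-squares reading is the intended one.
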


This corollary is  a special case of Schm\"{u}dgen's Positivstellensatz  \cite{Schm91} for convex, compact polyhedra  which includes an explicit bound on the degrees of sums of  squares coefficients $f_e^2$.

Schm\"{u}dgen's Positivstellensatz has many important applications, especially in solving polynomial optimization problems and moment problems for compact semi-algebraic sets. Therefore, as a special case of Schm\"{u}dgen's Positivstellensatz,   Handelman's theorem for polynomials plays an important role in application. 

  A matrix version of  P\'{o}lya's Positivstellensatz was given by Scherer and Hol \cite{SchH}, with  applications e.g. in robust polynomial semi-definite programs.  Schm\"udgen's theorem for operator polynomials has been discovered  by Cimprič and Zalar \cite{CZ}. Positivstellens\"atze for polynomial matrices have been studied by some other authors, see for example in \cite{Ci11}, \cite{Ci12}, \cite{SaSch}, \cite{Le}.     \textit{The main aim of this paper is to give a version of Handelman's Positivstellensatz for polynomial matrices with an explicit degree bound. }

We need to introduce some notations.  For $t\in \mathbb N^*$, let  $\mathcal{M}_t(R)$  denote the ring of square matrices of order $t$ with entries from  a commutative unital ring  $R$. Denote by $\mathcal{S}_t(R)$ the subset of  $\mathcal{M}_t(R)$  consisting of all symmetric matrices. \\
In this paper we consider mainly $R$ to be the   ring  $\mathbb R[X]$   of polynomials in $n$ variables $X_1,\cdots, X_n$ with real coefficients.  Each element $\bold{A} \in  \mathcal{M}_t(\mathbb R[X])$ is a matrix whose entries are polynomials from $\mathbb R[X]$, called a \emph{polynomial matrix}. Each element $\bold{A}\in \mathcal{M}_t(\mathbb R[X])$ is also called a \emph{matrix polynomial},  because it can be viewed as a polynomial in $X_1,\cdots, X_n$ whose entries from $\mathcal{M}_t(\mathbb R)$. Namely, we can write $\bold{A}$ as 
$$  \bold{A}=\sum_{|\alpha|=0}^{d} \bold{A}_{\alpha}X^{\alpha}, $$
where $\alpha=(\alpha_1,\cdots,\alpha_n) \in \mathbb N^{n}$, $|\alpha|:=\alpha_1+\cdots + \alpha_n$, $X^{\alpha}:=X_1^{\alpha_1}\cdots X_n^{\alpha_n}$, $\bold{A}_\alpha \in \mathcal{M}_t(\mathbb R)$, $d$ is the maximum over all degree of entries of $\bold{A}$, and  it is called the \textit{degree of the matrix polynomial $\bold{A}$}.  To unify notation, throughout the paper each element of $\mathcal{M}_t(\mathbb R[X])$ is called a \emph{polynomial matrix}. \\
For any polynomial matrix $\bold{A}\in \mathcal{M}_t(\mathbb R[X])$ and for any subset $K\subseteq \mathbb R^n$,  by  $\bold{A}\succcurlyeq 0 $ (resp. $\bold{A}\succ 0$)  \textit{on $K$} we mean that for any $x\in K$, the matrix  $\bold{A}(x)$ is \emph{positive semidefinite} (resp. \textit{positive definite}),  i.e.   all eigenvalues of the matrix $\bold{A}(x)$ are non-negative (resp. positive).   \\
 For any polynomial matrices $\bold{A}, \bold{B}\in \mathcal{M}_t(\mathbb R[X])$, the notation \textit{$\bold{A}\succcurlyeq  \bold{B}$ on $K$} means that $\bold{A}-\bold{B}\succcurlyeq  \bold{0}$ on $K$.
 
Suppose that we have a convex, compact polyhedron $P\subseteq \mathbb R^n$  with non-empty interior, bounded by linear polynomials $L_1, \cdots, L_m\in \mathbb R[X]$, defined by (\ref{polyhedron}). Let $\bold{F} \in \mathcal{S}_t(\mathbb R[X])$ be a  polynomial matrix of degree $d>0$. Assume $\bold{F} \succ \bold{0}$ on $P$. The main result of this paper is presented in     Theorem \ref{thr:main} which is a  matrix version of Handelman's Positivstellensatz, stating  that there exists a  number $N_0$  such that for all integer $N >N_0$ the polynomial matrix $\bold{F}$ can be written as
$$\bold{F}=\sum_{|\alpha|=N+d}\bold{F}_\alpha L_1^{\alpha_1}\cdots L_m^{\alpha_m},$$
where  $\bold{F}_\alpha \in \mathcal{S}_t(\mathbb R)$ are positive definite scalar  matrices  with $|\alpha|=N+d$.\\
 The main idea  in the proof of this theorem inherits  from Powers and Reznick   \cite{PR}, using a matrix version of P\'{o}lya's Positivstellensatz  \cite{SchH} and the continuity of  eigenvalue functions of the polynomial matrix $\bold{F}$ on the entries  of $\bold{F}$ (by \cite[Theorem 1]{Z}). 
  As a corollary of this theorem, we give a special case of Schm\"udgen's Positivstellensatz for polynomial matrices positive definite on  convex, compact polyhedra.  Furthermore, we give a procedure to find such a representation for the polynomial matrix $\bold{F}$. 
 \section{Representation of polynomial matrices positive definite on simplices}
In this section we consider a simple case  where $P$ is an $n$-simplex with vertices 
$\{v_0,v_1,\cdots,v_n\}$ and let $\{L_0,L_1,\cdots,L_n\}$ be the set of barycentric coordinates of $P$, that is, each $L_i\in \mathbb R[X]$ is linear and 
\begin{equation}\label{simplex}
X=\sum_{i=0}^n L_i(X) v_i, ~ \sum_{i=0}^nL_i(X)=1, L_i(v_j)=\delta_{ij}.
\end{equation} 
Let $\bold{F}\in \mathcal{S}_t(\mathbb R[X])$ be a polynomial matrix of degree $d>0$. We can express   $\bold{F}$ as 
$$\bold{F}(X)=\sum_{|\alpha|\leq d} \bold{A}_\alpha X^{\alpha}, $$
where $\bold{A}_\alpha \in \mathcal{M}_t(\mathbb R)$. 

Let us consider the \textit{Bernstein-B\'{e}zier form} of $\bold{F}$ with respect to $P$:
\begin{equation}\label{Bernstein-Bezier}
\bold{\widetilde{F}}_d(Y):=\bold{\widetilde{F}}_d(Y_0,\cdots,Y_n):=\sum_{|\alpha|\leq d} \bold{A}_\alpha \Big(\sum_{i=0}^n Y_iv_i\Big)^\alpha \Big(\sum_{i=0}^n Y_i\Big)^{d-|\alpha|}.
\end{equation}
It is easy to see that $\bold{\widetilde{F}}_d(Y)\in \mathcal{S}_t(\mathbb R[Y])$ is a homogeneous polynomial matrix of degree $d$. Moreover, it follows from the relations (\ref{simplex}) that 
$$ \bold{\widetilde{F}}_d(L_0,\cdots,L_n)=\bold{F}(X).$$
Following Scherer and Hol \cite{SchH}, for each multi-index $\alpha=(\alpha_1,\cdots,\alpha_n)\in \mathbb N^n$, let us  denote
$$  \alpha!:=\alpha_1!\cdots \alpha_n!; ~ D_\alpha:=\partial_1^{\alpha_1}\cdots\partial_n^{\alpha_n}.$$
With these notations,  we can re-write $\bold{F}$ as 
$$\bold{F}(X)= \sum_{|\alpha|\leq d} \dfrac{D_\alpha \bold{F}(0)}{\alpha!}X^\alpha.$$
With the spectral norm $\|\bold{\cdot}\|$, following Scherer and Hol \cite{SchH}, we define
\begin{equation}\label{pt:norm}
C(\bold{F}):=\max_{|\alpha|\leq d}\dfrac{\|D_\alpha \bold{F}(0)\|}{|\alpha|!}.
\end{equation}

Using these notations,  we have the following representation of polynomial matrices which are positive on simplices.
\begin{theorem} \label{thr-simplex} Let $P\subseteq \mathbb R^n$ be an $n$-simplex given as above and $\bold{F}\in \mathcal{S}_t(\mathbb R[X])$ a polynomial matrix of degree $d>0$. Assume that $\bold{F}\succcurlyeq \lambda \bold{I}_t$ on $P$ for some $\lambda >0$.  Let  $C:=C(\bold{\widetilde{F}_d})$. Then for each $N>\dfrac{d(d-1)}{2} \dfrac{C}{\lambda}-d$, $\bold{F}$ can be represented as
$$\bold{F}=\sum_{|\alpha|=N+d}\bold{F}_\alpha L_0^{\alpha_0}\cdots L_n^{\alpha_n},$$
where  each  $\bold{F}_\alpha\in \mathcal{S}_t(\mathbb R)$ is positive  definite.
\end{theorem}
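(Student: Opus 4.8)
The plan is to reduce the assertion to a quantitative matrix version of P\'olya's theorem on the standard simplex, via the Bernstein--B\'ezier substitution, and then to substitute the barycentric coordinates back.

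First I would record the properties of $\bold{\widetilde{F}}_d$ that are already visible from its definition~\eqref{Bernstein-Bezier}: it is homogeneous of degree $d$ in $Y=(Y_0,\dots,Y_n)$, it belongs to $\mathcal S_t(\mathbb R[Y])$ since $\bold F\in\mathcal S_t(\mathbb R[X])$, and $\bold{\widetilde{F}}_d(L_0,\dots,L_n)=\bold F$ by the relations~\eqref{simplex}. The same relations show that $x\mapsto(L_0(x),\dots,L_n(x))$ is an affine bijection from $P$ onto the standard simplex $\Delta_{n+1}\subseteq\mathbb R^{n+1}$ (with coordinates $y_0,\dots,y_n$). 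Consequently the hypothesis $\bold F\succcurlyeq\lambda\bold{I}_t$ on $P$ is equivalent to: the smallest eigenvalue of $\bold{\widetilde{F}}_d(y)$ is at least $\lambda$ for every $y\in\Delta_{n+1}$.

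The core of the proof is then the quantitative matrix analogue of P\'olya's theorem. If $\bold G\in\mathcal S_t(\mathbb R[Y_0,\dots,Y_n])$ is homogeneous of degree $d$ and the smallest eigenvalue of $\bold G(y)$ is $\ge\lambda>0$ for all $y\in\Delta_{n+1}$, then for every integer $N>\frac{d(d-1)}{2}\frac{C(\bold G)}{\lambda}-d$ one has
$$(Y_0+\cdots+Y_n)^N\,\bold G(Y)=\sum_{|\alpha|=N+d}\bold G_\alpha\,Y^\alpha,\qquad \bold G_\alpha\in\mathcal S_t(\mathbb R),\ \bold G_\alpha\succ\bold 0.$$
This is the Scherer--Hol matrix P\'olya Positivstellensatz~\cite{SchH} together with the explicit Powers--Reznick degree bound~\cite{PR}; I would either invoke it directly or re-run the Powers--Reznick estimate with absolute values replaced by the spectral norm. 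Concretely: expand the left-hand side by the multinomial theorem; for $|\alpha|=N+d$ express $\bold G_\alpha$, up to a positive scalar factor, as $\bold G$ evaluated at the rational point $\alpha/(N+d)\in\Delta_{n+1}$ plus correction terms whose spectral norm does not exceed $\frac{d(d-1)}{2}\frac{C(\bold G)}{N+d}$; then each $\bold G_\alpha$ is positive definite once $\frac{d(d-1)}{2}\frac{C(\bold G)}{N+d}<\lambda$, the only matrix-specific input being that $\bold A\succcurlyeq\lambda\bold{I}_t$ together with $\|\bold B\|<\lambda$ forces $\bold A+\bold B\succ\bold 0$ (Weyl's inequality). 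Applying this with $\bold G=\bold{\widetilde{F}}_d$, so that $C(\bold G)=C$, yields symmetric positive definite scalar matrices $\bold F_\alpha:=\bold G_\alpha$ with $(Y_0+\cdots+Y_n)^N\,\bold{\widetilde{F}}_d(Y)=\sum_{|\alpha|=N+d}\bold F_\alpha Y^\alpha$ for every $N$ in the stated range.

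Finally I would substitute $Y_i=L_i(X)$ in this identity. Since $\sum_{i=0}^n L_i(X)=1$, the left-hand side becomes $\bold{\widetilde{F}}_d(L_0,\dots,L_n)=\bold F(X)$, while the right-hand side becomes $\sum_{|\alpha|=N+d}\bold F_\alpha L_0^{\alpha_0}\cdots L_n^{\alpha_n}$, which is precisely the claimed representation. The step I expect to be the real obstacle is the middle one --- ensuring that the matrix version of P\'olya's theorem is available with exactly the constant $\frac{d(d-1)}{2}\frac{C}{\lambda}-d$, i.e. checking that the Powers--Reznick counting argument carries over verbatim to symmetric-matrix coefficients.
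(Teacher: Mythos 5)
Your proposal is correct and follows essentially the same route as the paper: pass to the Bernstein--B\'ezier form $\bold{\widetilde F}_d$, observe that $\bold F\succcurlyeq\lambda\bold I_t$ on $P$ forces $\bold{\widetilde F}_d\succcurlyeq\lambda\bold I_t$ on $\Delta_{n+1}$, invoke the Scherer--Hol matrix P\'olya theorem with the Powers--Reznick degree bound, and substitute $Y_i=L_i(X)$ using $\sum_i L_i=1$. The obstacle you flag at the end is already discharged by \cite[Theorem 3]{SchH}, which is exactly what the paper cites and which delivers the bound $N>\tfrac{d(d-1)}{2}\tfrac{C}{\lambda}-d$ in the symmetric-matrix setting.
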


\begin{proof} Let us denote by $\Delta_{n+1}$ the standard simplex in $\mathbb R^{n+1}$, i.e.
$$ \Delta_{n+1}=\{(y_0,\cdots,y_n)\in \mathbb R^{n+1}| y_i\geq 0, \sum_{i=0}^n y_i=1\}. $$
Since $\bold{F}(x)\succcurlyeq \lambda \bold{I}_t$  for all $x\in P$, the  Bernstein-B\'{e}zier form
 $\bold{\widetilde{F}_d}$ of $\bold{F}$ with respect to $P$ satisfies
$$ \bold{\widetilde{F}_d}(y_0,\cdots,y_n)\succcurlyeq \lambda \bold{I}_t, \forall (y_0,\cdots,y_n)\in \Delta_{n+1}. $$
Then it follows from P\'{o}lya's theorem for polynomial matrices \cite[Theorem 3]{SchH}, that for each $N> \dfrac{d(d-1)}{2} \dfrac{C}{\lambda}-d$,  
\begin{equation}\label{pt-cm-simplex}
(\sum_{i=0}^n Y_i)^N\bold{\widetilde{F}_d}(Y)=\sum_{|\alpha|=N+d}\bold{F}_\alpha Y_0^{\alpha_0}\cdots Y_n^{\alpha_n},
\end{equation}
where  each  $\bold{F}_\alpha\in \mathcal{S}_t(\mathbb R)$ is positive  definite. Substituting $Y_i$ by $L_i$ on both sides of (\ref{pt-cm-simplex}), noting that 
$$\bold{\widetilde{F}_d}(L_0(X),\cdots,L_n(X)) = F(X) \mbox{ and } \sum_{i=0}^N L_i(X)=1,$$
 we obtain the required representation for $\bold{F}$.
\end{proof}
\section{Representation of polynomial matrices positive definite on convex, compact polyhedra}
Throughout this section, let $P\subseteq \mathbb R^n$ be a  convex, compact polyhedron  with non-empty interior, given by (\ref{polyhedron}). 
By \cite{Schw0},  there exist positive numbers $c_i\in \mathbb R$ such that $\displaystyle\sum_{i=1}^m c_iL_i(X)=1$. Replacing each  $L_i$ by $c_iL_i$ we  may assume that 
\begin{equation}\label{quanhe-lambda} \sum_{i=1}^m L_i(X)=1. 
\end{equation}
Moreover, it is easy to check that for each  $i=1,\cdots,n$,  there exist real numbers $b_{ij}\in \mathbb R, j=1,\cdots,m$ such that 
$$ X_i = \sum_{j=1}^m b_{ij}L_j(X). $$
Let us consider the $n\times m$ matrix $\bold{B}:=(b_{ij})_{i=1,\cdots,n; j=1,\cdots,m}$. Then for $X=(X_1,\cdots,X_n)$ and  $L=(L_1,\cdots,L_m)$, we have 
$ X^T=\bold{B}\cdot L^T. $
In other words, we have 
\begin{equation}\label{eq:matran}X=L\cdot \bold{B}^T.
\end{equation} 
Denote $\mathbb R[Y]:=\mathbb R[Y_1,\cdots,Y_m]$, and consider the ring homomorphism 
$$\varphi: \mathbb R[Y] \rightarrow \mathbb R[X], \quad Y_i\longmapsto L_i(X), \forall i=1,\cdots,m.$$
It follows from (\ref{quanhe-lambda}) that $\sum_{i=1}^m Y_i-1\in \Ker(\varphi)$. Hence we may assume that the ideal $I:=\mbox{Ker}(\varphi)$ is generated by polynomials $R_1(Y),\cdots, R_s(Y)\in \mathbb R[Y]$,
 $$I:=\mbox{Ker}(\varphi)=\seq{R_1(Y),\cdots,R_s(Y)},$$
 where $\sum_{i=1}^m Y_i-1$  is one of the $R_i$'s. 
 
 Note that the homomorphism $\varphi$ induces a ring homomorphism 
$$ M_\varphi: \mathcal{M}_t(\mathbb R[Y]) \longrightarrow \mathcal{M}_t(\mathbb R[X]),\quad  \bold{G}=(g_{ij}(Y))\longmapsto (\varphi(g_{ij}(Y))).   $$

\begin{lemma}\label{lm:surjective} The homomorphism $M_\varphi$ is surjective, and 
$$ \mathcal{I}:=\Ker(M_\varphi)=\seq{R_1(Y)\bold{I}_t,\cdots, R_s(Y)\bold{I}_t},$$
where $\bold{I}_t$ denotes the identity matrix in $\mathcal{M}_t(\mathbb R[Y])$.
\end{lemma}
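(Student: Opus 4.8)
The plan is to reduce both assertions to the scalar homomorphism $\varphi$ applied entrywise, and then to exploit the fact that the matrices $R_k(Y)\bold{I}_t$ are \emph{central} in $\mathcal{M}_t(\mathbb R[Y])$.

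First I would dispatch surjectivity. Observe that $\varphi$ is already surjective: by (\ref{eq:matran}) each variable $X_i$ equals $\sum_{j=1}^m b_{ij}L_j(X)=\varphi\big(\sum_{j=1}^m b_{ij}Y_j\big)$, so every $X_i$ lies in the image of $\varphi$; since the $X_i$ generate $\mathbb R[X]$ as an $\mathbb R$-algebra and $\varphi$ is an $\mathbb R$-algebra homomorphism, $\varphi$ is onto. Now given an arbitrary $\bold{A}=(a_{ij}(X))\in\mathcal{M}_t(\mathbb R[X])$, choose for each pair $(i,j)$ a polynomial $g_{ij}(Y)\in\mathbb R[Y]$ with $\varphi(g_{ij})=a_{ij}$; then $\bold{G}:=(g_{ij})$ satisfies $M_\varphi(\bold{G})=\bold{A}$, proving that $M_\varphi$ is surjective. (Here one uses that $M_\varphi$ is a ring homomorphism, which holds because entrywise application of the ring homomorphism $\varphi$ is compatible with matrix addition and multiplication.)

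Next I would identify the kernel. A matrix $\bold{G}=(g_{ij})$ lies in $\Ker(M_\varphi)$ if and only if $\varphi(g_{ij})=0$ for all $i,j$, i.e. if and only if every entry $g_{ij}$ belongs to $I=\Ker(\varphi)$. The inclusion $\mathcal{I}\supseteq\seq{R_1(Y)\bold{I}_t,\dots,R_s(Y)\bold{I}_t}$ is immediate, since each generator $R_k(Y)\bold{I}_t$ has all its entries in $I$ (namely $R_k(Y)$ on the diagonal and $0$ off-diagonal), and $\mathcal{I}$ is an ideal. For the reverse inclusion, take $\bold{G}\in\Ker(M_\varphi)$ and, using $I=\seq{R_1(Y),\dots,R_s(Y)}$, write each entry as $g_{ij}=\sum_{k=1}^s h_{ij}^{(k)}R_k(Y)$ with $h_{ij}^{(k)}\in\mathbb R[Y]$. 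Setting $\bold{H}^{(k)}:=(h_{ij}^{(k)})\in\mathcal{M}_t(\mathbb R[Y])$, a direct computation of matrix entries gives $\bold{G}=\sum_{k=1}^s \bold{H}^{(k)}\cdot\big(R_k(Y)\bold{I}_t\big)$, so $\bold{G}$ lies in the ideal generated by the $R_k(Y)\bold{I}_t$. Combining the two inclusions yields $\mathcal{I}=\seq{R_1(Y)\bold{I}_t,\dots,R_s(Y)\bold{I}_t}$.

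The argument is essentially bookkeeping, so there is no serious obstacle; the one point that deserves explicit mention is that $\mathcal{M}_t(\mathbb R[Y])$ is noncommutative for $t\ge 2$, so one should note that each $R_k(Y)\bold{I}_t$ is a scalar, hence central, matrix. Consequently the left, right, and two-sided ideals it generates all coincide, and the representation $\bold{G}=\sum_k \bold{H}^{(k)}\big(R_k(Y)\bold{I}_t\big)$ exhibits $\bold{G}$ as an element of that ideal regardless of the chosen convention. Everything else follows from the entrywise description of $\Ker(M_\varphi)$ and the Noetherianity of $\mathbb R[Y]$ already used to write $I=\seq{R_1,\dots,R_s}$.
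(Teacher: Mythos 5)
Your proof is correct and follows essentially the same route as the paper: reduce both surjectivity and the kernel computation to the entrywise scalar homomorphism $\varphi$, then lift. The kernel argument is identical (and your explicit remark that $R_k(Y)\bold{I}_t$ is central, so left/right/two-sided ideals coincide, is a point the paper uses tacitly). The one place you diverge is surjectivity of $\varphi$: you argue via $\mathbb{R}$-algebra generators $X_i=\varphi\bigl(\sum_j b_{ij}Y_j\bigr)$, whereas the paper constructs an explicit homogeneous preimage $\widetilde g(Y)$ by formula (\ref{pt:tilde}); both are valid, but the paper's choice is not idle — that homogeneous lift $\widetilde g$ is reused to define $\widetilde{\bold{F}}$ and is central to the rest of the construction, so the explicit form is the one you actually want in hand.
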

\begin{proof}  For each $g(X)=\sum_{|\alpha|\leq d}a_\alpha X^\alpha\in \mathbb R[X]$, denote 
\begin{equation}\label{pt:tilde}
\widetilde{g}(Y):=\sum_{|\alpha|\leq d}a_\alpha (Y\cdot B^T)^\alpha \big(\sum_{i=1}^mY_i\big)^{d-|\alpha|}\in \mathbb R[Y].
\end{equation}
It is clear that $\widetilde{g}$ is homogeneous of degree $d$. Moreover  $\varphi(\widetilde{g}(Y))=g(X)$. Hence $\varphi$ is surjective.  Then the surjectivity of $M_\varphi$ follows from that of $\varphi$.  

On the other hand, $\bold{G}=(g_{ij}(Y))\in \Ker(M_\varphi)$ if and only if $g_{ij}\in \Ker(\varphi)$ for all $i,j=1,\cdots,t$. Hence for each $i,j=1,\cdots,t$ we have
 $$g_{ij}(Y)=\sum_{k=1}^s a_{ijk}(Y)R_k(Y), \mbox{ for some } a_{ijk}(Y) \in \mathbb R[Y]. $$
Then $\bold{G}$ can be written as
$$\bold{G}=\sum_{k=1}^s R_k\bold{A_k}=\sum_{k=1}^s (R_k\bold{I}_t)\bold{A_k}, $$
where  $\bold{A_k}=(a_{ijk}(Y))\in \mathcal{M}_t(\mathbb R[Y])$ for each $k=1,\cdots, s$. It is equivalent to the fact that $\bold{G}\in \seq{R_1\bold{I}_t,\cdots, R_s\bold{I}_t}.$
The proof is complete. 
\end{proof}

 Let $\bold{F}=(f_{ij})\in \mathcal{S}_t(\mathbb R[X])$ be a polynomial matrix of degree $d>0$. Denote $\bold{\widetilde{F}}:=(\widetilde{f_{ij}})\in \mathcal{S}_t(\mathbb R[Y])$, where each $\widetilde{f_{ij}}$ is defined by (\ref{pt:tilde}), which is a homogeneous polynomial of degree $d$.

Assume  $\lambda(\bold{F})$ is an eigenvalue function of $\bold{F}$. It follows from \cite[Theorem 1]{Z} that $\lambda(\bold{F})$  is a continuous function on $f_{ij}(X)$, $i,j=1,\cdots,t$. That is, there exists a continuous funtion $\Lambda : \mathbb R^{t\times t} \rightarrow \mathbb R$ such that $\lambda(\bold{F})=\Lambda(f_{ij}(X))$.  Denote 
$\widetilde{\lambda(\bold{F})}(Y):=\Lambda(\widetilde{f_{ij}}(Y))$, which is actually an eigenvalue function of the polynomial matrix $\bold{\widetilde{F}}$.  

 Denote $R(Y):=\displaystyle \sum_{i=1}^s R_i^2(Y)$. With the notations given above, we have the following useful lemma.

\begin{lemma}\label{lm:minors} Let $\bold{F}=(f_{ij})\in \mathcal{S}_t(\mathbb R[X])$ be a polynomial matrix of degree $d>0$.  Let $\lambda(\bold{F})$ is an eigenvalue function of $\bold{F}$.    If $\lambda(\bold{F})> 0$ on $P$, then there exists a sufficiently large constant $c$ such that $\widetilde{\lambda(\bold{F})}+cR>0$ on the standard $m$-simplex $ \Delta_m$.  More explicitly, this holds for $c>-m_1/m_2$, where $m_1$ is the minimum of $\widetilde{\lambda(\bold{F})}$ on $\Delta_m$ and $m_2$ is the minimum of the polynomial $R$ on the compact set 
$\Delta_m\cap \{y\in \mathbb R^m|\widetilde{\lambda(\bold{F})}(y)\leq 0\}$. 
\end{lemma}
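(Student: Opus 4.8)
The plan is to split the simplex $\Delta_m$ according to the sign of the continuous function $\widetilde{\lambda(\bold F)}$ and to use the sum of squares $R=\sum_{i=1}^s R_i^2$ to dominate its negative part. First I would set $K:=\Delta_m\cap\{y\in\mathbb R^m:\ \widetilde{\lambda(\bold F)}(y)\le 0\}$; since $\widetilde{\lambda(\bold F)}=\Lambda\circ(\widetilde{f_{ij}})$ is continuous and $\Delta_m$ is compact, $K$ is compact. If $K=\emptyset$ there is nothing to prove (any $c\ge 0$ works), so I assume $K\ne\emptyset$; then $m_1:=\min_{\Delta_m}\widetilde{\lambda(\bold F)}$ exists and is $\le 0$.

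The heart of the argument is the claim that $R>0$ on $K$, i.e. that $R$ does not vanish at any point of $\Delta_m$ where $\widetilde{\lambda(\bold F)}$ is nonpositive. Suppose $y\in\Delta_m$ and $R(y)=0$; then $R_i(y)=0$ for all $i$, so $y$ lies in the real vanishing set of the ideal $I=\Ker(\varphi)=\seq{R_1,\dots,R_s}$. Put $x:=y\bold B^T\in\mathbb R^n$. For each $i$, the (affine-linear) polynomial $h_i(Y):=L_i(Y\bold B^T)-Y_i\in\mathbb R[Y]$ satisfies $\varphi(h_i)=0$: applying $\varphi$ and using the identity $X=L\cdot\bold B^T$ of (\ref{eq:matran}) gives $\varphi\big(L_i(Y\bold B^T)\big)=L_i(X)=\varphi(Y_i)$. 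Hence $h_i\in I$, so $h_i(y)=0$, that is $L_i(x)=y_i$ for all $i$; in particular $L_i(x)=y_i\ge 0$, so $x\in P$. Evaluating the identity $\widetilde{f_{ij}}(L_1,\dots,L_m)=f_{ij}$ (which just records $\varphi(\widetilde{f_{ij}})=f_{ij}$) at $X=x$ yields $\widetilde{f_{ij}}(y)=f_{ij}(x)$, whence $\widetilde{\lambda(\bold F)}(y)=\Lambda\big((\widetilde{f_{ij}}(y))\big)=\Lambda\big((f_{ij}(x))\big)=\lambda(\bold F)(x)>0$, since $\lambda(\bold F)>0$ on $P$. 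This contradicts $\widetilde{\lambda(\bold F)}(y)\le 0$; hence $R>0$ on $K$, and by compactness $m_2:=\min_K R>0$.

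It then remains to check the explicit bound. Fix any $c>-m_1/m_2$; since $m_1\le 0$ and $m_2>0$, this number is $\ge 0$, so $c>0$. For $y\in\Delta_m$ with $\widetilde{\lambda(\bold F)}(y)>0$ we have $R(y)\ge 0$ and $c>0$, hence $\widetilde{\lambda(\bold F)}(y)+cR(y)\ge\widetilde{\lambda(\bold F)}(y)>0$. For $y\in K$ we have $R(y)\ge m_2$ and $\widetilde{\lambda(\bold F)}(y)\ge m_1$, hence $\widetilde{\lambda(\bold F)}(y)+cR(y)\ge m_1+cm_2>0$ by the choice of $c$. Since $\Delta_m$ is the union of these two sets, $\widetilde{\lambda(\bold F)}+cR>0$ on $\Delta_m$, as required.

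I expect the only genuine obstacle to be the claim $R>0$ on $K$: it rests on the geometric fact that a zero of $R$ inside $\Delta_m$ is exactly the image $(L_1(x),\dots,L_m(x))$ of a point $x\in P$ under the (affine, injective) map dual to $\varphi$, together with the compatibility $\widetilde{\lambda(\bold F)}(y)=\lambda(\bold F)(x)$ at such a point. Everything else is a routine compactness-and-splitting estimate, and the numerical threshold $c>-m_1/m_2$ is precisely what is needed to make the bound on $K$ valid.
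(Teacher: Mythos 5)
Your proof is correct and follows essentially the same route the paper has in mind: the paper's proof is merely the one-line deferral ``the proof goes along the same lines as the proof of \cite[Lemma 4]{PR}, using continuity of $\widetilde{\lambda(\bold F)}$,'' and what you have written out is precisely the Powers--Reznick argument adapted to this setting (compactness of $K$, the key fact that $R>0$ on $K$ because a common zero $y\in\Delta_m$ of the $R_i$'s is the image $(L_1(x),\dots,L_m(x))$ of some $x\in P$ with $\widetilde{\lambda(\bold F)}(y)=\lambda(\bold F)(x)>0$, and then the elementary threshold estimate). In particular your verification that each $h_i(Y)=L_i(Y\bold B^T)-Y_i$ lies in $\Ker(\varphi)$, and hence that $m_2>0$, supplies exactly the detail the paper leaves to the reference.
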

\begin{proof}
The proof goes along the same lines as the proof of \cite[Lemma 4]{PR}, using continuity of the function $\widetilde{\lambda(\bold{F})}$.
\end{proof}

Applying this lemma, we have 

\begin{lemma}\label{lm:positive-on-simplex} Let $\bold{F}=(f_{ij})\in \mathcal{S}_t(\mathbb R[X])$ be a polynomial matrix of degree $d>0$. Denote 
$\bold{\widetilde{F}}:=(\widetilde{f_{ij}})\in \mathcal{S}_t(\mathbb R[Y])$. 
 Assume $\bold{F}\succ \bold{0}$ on $ P$. Then there exists a sufficiently large constant  $c$ such that $\widetilde{\bold{F}}+cR\bold{I}_t \succ \bold{0}$ on the standard $m$-simplex  $\Delta_m$.
\end{lemma}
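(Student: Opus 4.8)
The plan is to reduce the matrix statement to the scalar statement of Lemma~\ref{lm:minors}, applied separately to each eigenvalue function of $\bold{F}$, and then to recombine the resulting constants.

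First I would recall that, since $\bold{F}(x)$ is symmetric for every $x\in\mathbb R^n$, its $t$ eigenvalues are real and can be arranged (say, in non-increasing order) as continuous functions $\lambda_1(\bold{F}),\ldots,\lambda_t(\bold{F})$ of the entries $f_{ij}(X)$; this is the content of \cite[Theorem 1]{Z} invoked in the text, so that $\lambda_k(\bold{F})=\Lambda_k(f_{ij}(X))$ for continuous maps $\Lambda_k:\mathbb R^{t\times t}\to\mathbb R$, and for every symmetric matrix $M$ the list $\Lambda_1(M),\ldots,\Lambda_t(M)$ (with multiplicities) is exactly the spectrum of $M$. The hypothesis $\bold{F}\succ\bold{0}$ on $P$ says precisely that $\lambda_k(\bold{F})>0$ on $P$ for each $k=1,\ldots,t$.

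Next, for each fixed $k$ I would apply Lemma~\ref{lm:minors} to the eigenvalue function $\lambda_k(\bold{F})$: there is a constant $c_k$ (which we may take $\geq 0$) such that $\widetilde{\lambda_k(\bold{F})}+c_kR>0$ on $\Delta_m$, where $\widetilde{\lambda_k(\bold{F})}(Y)=\Lambda_k(\widetilde{f_{ij}}(Y))$. Put $c:=\max\{c_1,\ldots,c_t\}$. Since $R=\sum_{i=1}^sR_i^2\geq 0$ everywhere, replacing $c_k$ by the larger constant $c$ only increases the left-hand side, so $\widetilde{\lambda_k(\bold{F})}+cR>0$ on $\Delta_m$ simultaneously for all $k=1,\ldots,t$. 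Finally, I would identify the spectrum of $\widetilde{\bold{F}}+cR\bold{I}_t$ on $\Delta_m$: because $\bold{F}$ is symmetric we have $\widetilde{f_{ij}}=\widetilde{f_{ji}}$, so $\widetilde{\bold{F}}(y)$ is symmetric for every $y$, and evaluating the selectors $\Lambda_1,\ldots,\Lambda_t$ at this matrix shows that its eigenvalues are precisely $\widetilde{\lambda_1(\bold{F})}(y),\ldots,\widetilde{\lambda_t(\bold{F})}(y)$ — this is the remark, made just before the lemma, that each $\widetilde{\lambda(\bold{F})}$ is an eigenvalue function of $\widetilde{\bold{F}}$. Adding the scalar matrix $cR(y)\bold{I}_t$ shifts every eigenvalue by $cR(y)$, so the eigenvalues of $(\widetilde{\bold{F}}+cR\bold{I}_t)(y)$ are $\widetilde{\lambda_k(\bold{F})}(y)+cR(y)$, $k=1,\ldots,t$, which by the previous step are all strictly positive for $y\in\Delta_m$. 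Hence $\widetilde{\bold{F}}+cR\bold{I}_t\succ\bold{0}$ on $\Delta_m$.

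The only genuinely delicate point is the bookkeeping around eigenvalue functions: the operation $g\mapsto\widetilde g$ is not a ring homomorphism, so one cannot argue purely algebraically, but since each eigenvalue function is by construction a fixed continuous function $\Lambda_k$ evaluated pointwise at the entry values, the passage from $\bold{F}$ to $\widetilde{\bold{F}}$ commutes with taking $\Lambda_k$; and it is the finiteness of the family $\{\lambda_k(\bold{F})\}_{k=1}^t$ that lets us pass to a single constant $c$ by taking a finite maximum. Everything else is the scalar Lemma~\ref{lm:minors} together with the elementary observation that $R\geq 0$.
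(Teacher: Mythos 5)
Your proof is correct and follows essentially the same route as the paper: apply Lemma~\ref{lm:minors} to each of the $t$ eigenvalue functions $\lambda_k(\bold{F})$, take $c:=\max_k c_k$, and observe that the eigenvalues of $\widetilde{\bold{F}}+cR\bold{I}_t$ are $\widetilde{\lambda_k(\bold{F})}+cR$. The only differences are that you spell out two points the paper leaves implicit — that $R\geq 0$ is what lets the single maximum $c$ work simultaneously for all $k$, and that the identification of the spectrum of $\widetilde{\bold{F}}$ uses the fixed continuous selectors $\Lambda_k$ rather than any (nonexistent) ring-homomorphism property of $g\mapsto\widetilde g$ — both of which are worthwhile clarifications but not a change of method.
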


\begin{proof}  Since $\bold{F}$ is positive definite on $P$, its eigenvalue functions
$\lambda_k(\bold{F}), k=1,\cdots,t$, are positive   on $P$. It follows from Lemma \ref{lm:minors} that for each $k$, there exists a sufficiently large constant $c_k$ such that $\widetilde{\lambda_k(\bold{F})} + c_k R$ is positive on $\Delta_m$. Let $ c:=\displaystyle\max_{k=1,\cdots,t}{c_k}$. Then $\widetilde{\lambda_k(\bold{F})} + c R$ is positive on $\Delta_m$ for each $k=1, \cdots, t$. Note that, $\widetilde{\lambda_k(\bold{F})}$, $k=1,\cdots,t$, are eigenvalues of the polynomial matrix $\widetilde{\bold{F}}$. Moreover, the eigenvalues of the matrix $\widetilde{\bold{F}}+cR\bold{I}_t$ are $\widetilde{\lambda_k(\bold{F})}+cR$, $k=1,\cdots,t$. It follows that  $\widetilde{\bold{F}}+cr\bold{I}_t$  is positive definite on $\Delta_m$. The proof is complete. 
\end{proof}

Note that $\overline{\bold{F}}:=\widetilde{\bold{F}}+cR\bold{I}_t$ need not be homogeneous. However, by homogenization $\overline{\bold{F}}$ by $\displaystyle\sum_{i=1}^m Y_i$, we obtain a homogeneous polynomial matrix of the same degree as $\overline{\bold{F}}$. More explicitly, if we express $\overline{\bold{F}}$ as 
$$\overline{\bold{F}} = \sum_{|\beta|\leq d} \bold{\overline{F}}_\beta Y^\beta,\quad \bold{\overline{F}}_\beta\in \mathcal{S}_t(\mathbb R), $$
then its homogenization by $\displaystyle\sum_{i=1}^m Y_i$ is 
\begin{equation}\label{pt:homogenization} \overline{\bold{F}}^h=\sum_{|\beta|\leq d} \bold{\overline{F}}_\beta Y^\beta (\sum_{i=1}^m Y_i)^{d-|\beta|}.
\end{equation}
  $ \overline{\bold{F}}^h$ is a homogeneous   polynomial matrix of degree $d$. Moreover, 
$M_\varphi(\overline{\bold{F}}^h)=\bold{F}$, and $\overline{\bold{F}}^h$ is positive definite on $\Delta_m$.

 Now we can state and prove the following matrix version of Handelman's Positivstellensatz.

\begin{theorem}\label{thr:main} Let $P$, $\bold{F}$, $\overline{\bold{F}}$, $\overline{\bold{F}}^h$ be given as above, with $\bold{F}$  positive definite on $P$. Assume that  $\overline{\bold{F}}^h\succcurlyeq \lambda \bold{I}_t$ on $\Delta_m$ for some $\lambda >0$.  Let  $d:=\mbox{deg}(\overline{\bold{F}})$ and $C:=C(\overline{\bold{F}}^h)$.  Then for each $N> \dfrac{d(d-1)}{2} \dfrac{C}{\lambda}-d$, $\bold{F}$ can be represented as
\begin{equation}\label{equ-representation}
\bold{F}=\sum_{|\alpha|=N+d}\bold{F}_\alpha L_1^{\alpha_1}\cdots L_m^{\alpha_m},
\end{equation}
where  each  $\bold{F}_\alpha\in \mathcal{S}_t(\mathbb R)$ is positive  definite. 
\end{theorem}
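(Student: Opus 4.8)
The plan is to transfer the matrix version of P\'{o}lya's theorem, applied to the homogeneous polynomial matrix $\overline{\bold{F}}^h$ on the standard simplex $\Delta_m$, back to $\mathbb{R}[X]$ along the ring homomorphism $M_\varphi$; this is the exact matrix analogue of the reduction used in the proof of Theorem~\ref{thr-simplex} and in Powers--Reznick~\cite{PR}.

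First I would collect the three facts recorded just before the statement: $\overline{\bold{F}}^h \in \mathcal{S}_t(\mathbb{R}[Y])$ is homogeneous of degree $d = \mathrm{deg}(\overline{\bold{F}})$; it is positive definite on $\Delta_m$, and in fact $\overline{\bold{F}}^h \succcurlyeq \lambda \bold{I}_t$ on $\Delta_m$ by hypothesis; and $M_\varphi(\overline{\bold{F}}^h) = \bold{F}$. The last point is the one deserving care, and I would spell it out: $M_\varphi(\widetilde{\bold{F}}) = \bold{F}$ because $\varphi(\widetilde{f_{ij}}) = f_{ij}$ by the construction~(\ref{pt:tilde}); the correction term satisfies $M_\varphi(cR\bold{I}_t) = \bold{0}$, since $R = \sum_{i=1}^s R_i^2$ with each $R_i \in \Ker(\varphi)$, equivalently $R\bold{I}_t \in \mathcal{I} = \Ker(M_\varphi)$ by Lemma~\ref{lm:surjective}; and homogenizing $\overline{\bold{F}}$ by $\sum_{i=1}^m Y_i$ does not alter the $M_\varphi$-image because $\varphi\!\left(\sum_{i=1}^m Y_i\right) = \sum_{i=1}^m L_i(X) = 1$ by~(\ref{quanhe-lambda}). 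Combining these, $M_\varphi(\overline{\bold{F}}^h) = M_\varphi(\overline{\bold{F}}) = \bold{F}$.

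Next I would invoke the matrix P\'{o}lya Positivstellensatz of Scherer and Hol~\cite[Theorem 3]{SchH}: since $\overline{\bold{F}}^h$ is homogeneous of degree $d$, satisfies $\overline{\bold{F}}^h \succcurlyeq \lambda \bold{I}_t$ on $\Delta_m$, and $C = C(\overline{\bold{F}}^h)$, for every integer $N > \dfrac{d(d-1)}{2}\dfrac{C}{\lambda} - d$ all coefficients of $\left(\sum_{i=1}^m Y_i\right)^{N}\overline{\bold{F}}^h(Y)$ are positive definite scalar matrices, that is,
\[
\left(\sum_{i=1}^m Y_i\right)^{N}\overline{\bold{F}}^h(Y) = \sum_{|\alpha| = N+d} \bold{F}_\alpha\, Y_1^{\alpha_1}\cdots Y_m^{\alpha_m}, \qquad \bold{F}_\alpha \in \mathcal{S}_t(\mathbb{R}),\ \bold{F}_\alpha \succ \bold{0}.
\]
Finally I would apply $M_\varphi$ to both sides. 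On the right it substitutes $Y_i \mapsto L_i(X)$ and fixes the scalar matrices $\bold{F}_\alpha$, producing $\sum_{|\alpha| = N+d}\bold{F}_\alpha L_1^{\alpha_1}\cdots L_m^{\alpha_m}$; on the left, by multiplicativity together with $\varphi\!\left(\sum_{i=1}^m Y_i\right) = 1$ and $M_\varphi(\overline{\bold{F}}^h) = \bold{F}$, it produces $1^{N}\cdot \bold{F} = \bold{F}$. Equating the two images gives precisely~(\ref{equ-representation}), with each $\bold{F}_\alpha$ positive definite.

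I do not expect a genuine obstacle: every step is either a bookkeeping identity about $M_\varphi$ or a direct citation of \cite[Theorem 3]{SchH}. The only point that must be handled cleanly is the compatibility $M_\varphi(\overline{\bold{F}}^h) = \bold{F}$ together with the role of the term $cR\bold{I}_t$, which was introduced precisely so that $\overline{\bold{F}}^h$ becomes positive definite on \emph{all} of $\Delta_m$ while keeping its $M_\varphi$-image equal to $\bold{F}$; the explicit degree bound and the positive definiteness of the $\bold{F}_\alpha$ are then inherited verbatim from the matrix P\'{o}lya theorem.
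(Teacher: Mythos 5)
Your proof is correct and follows exactly the same route as the paper's: apply the Scherer--Hol matrix P\'olya theorem to $\overline{\bold{F}}^h$ on $\Delta_m$, then push the resulting identity through $M_\varphi$ using $M_\varphi(\overline{\bold{F}}^h)=\bold{F}$ and $\varphi\big(\sum_{i=1}^m Y_i\big)=1$. You simply spell out more carefully (via Lemma~\ref{lm:surjective} and equations~(\ref{quanhe-lambda}), (\ref{pt:tilde}), (\ref{pt:homogenization})) why $M_\varphi(\overline{\bold{F}}^h)=\bold{F}$, a fact the paper records just before the theorem without re-deriving it in the proof.
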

\begin{proof}
Firstly, applying  the matrix version of P\'{o}lya's Positivstellensatz given in  \cite[Theorem 3]{SchH} for $\overline{\bold{F}}^h$, observing that $d=\deg(\overline{\bold{F}}^h)$. Then, applying $M_\varphi$, using the fact that $M_\varphi(\overline{\bold{F}}^h)=\bold{F}$ and $\varphi(\displaystyle\sum_{i=1}^m Y_i)=1$.
\end{proof}

As a summary, we formulate the construction given  above  as a procedure  to find a   representation  
for the  polynomial matrix $\bold{F}=(f_{ij})\in \mathcal{S}_t(\mathbb R[X])$ positive definite on  a convex, compact polyhedron $P\subseteq \mathbb R^n$ as follows:
 
\begin{itemize}
\item[(1)] Following \cite{Ha} to find positive constants $c_i\in \mathbb R$ such that $\sum_{i=1}^m c_iL_i(X) = 1$. Constructing the $c_i$'s comes down to find a positive solution to an under-determined linear system.
\item[(2)] Solving the system of equations 
$$X_i=\sum_{j=1}^m b_{ij}L_{i}(X), \quad i=1,\cdots,n, $$
to find the matrix $\bold{B}=(b_{ij})_{i=1,\cdots,n; j=1,\cdots,m}$.
\item[(3)] Using (\ref{pt:tilde}) to find $\widetilde{f_{ij}}$, $i,j=1,\cdots,t$.
\item[(4)] Using  Gr\"{o}bner bases to find a basis $\{R_1,\cdots,R_s\}$ for the kernel $\mbox{Ker}(\varphi)$ of the ring homomorphism $\varphi$.
\item[(5)] Following the proof of Lemma \ref{lm:positive-on-simplex} to find a sufficiently large $c$ such that $\bold{\widetilde{F}}+c R\bold{I}_t \succ \bold{0}$ on $\Delta_m$.
\item[(6)] Using (\ref{pt:homogenization}) to construct the homogenization $\overline{\bold{F}}^h$ of $\overline{\bold{F}}:=\bold{\widetilde{F}}+c  R \bold{I}_t$.
\item[(7)] Following the proof of  Lemma \ref{lm:compact} below to find the positive number $\lambda$ such that $\overline{\bold{F}}^h(y)\succcurlyeq \lambda \bold{I}_t$ for all $y\in \Delta_m$.
\end{itemize}
\begin{lemma} \label{lm:compact}Let $K\subseteq \mathbb R^m$ be a non-empty compact set, and $\bold{G}\in \mathcal{S}_t(\mathbb R[Y])$. Then there exists a number $c \in \mathbb R$ such that 
$$\bold{G}(y)\succcurlyeq   c\bold{I}_t, \mbox{ for all } y \in K.  $$
In particular, if $\bold{G}(y)\succ 0$ for all $y\in K$, then we can choose a number  $c >0 $ such that  $\bold{G}(y)\succcurlyeq  c \bold{I}_t, \mbox{ for all } y \in K.  $
\end{lemma}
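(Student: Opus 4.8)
The plan is to reduce the matrix statement to a scalar one via the minimal eigenvalue function, then invoke compactness. First I would introduce the function $\mu:\mathbb R^m\to\mathbb R$ defined by $\mu(y):=\lambda_{\min}(\bold{G}(y))$, the smallest eigenvalue of the symmetric matrix $\bold{G}(y)$. Since the entries $g_{ij}(Y)$ of $\bold{G}$ are polynomials, hence continuous on $\mathbb R^m$, and since by \cite[Theorem 1]{Z} every eigenvalue function of a symmetric matrix depends continuously on its entries, the composition $\mu$ is continuous on $\mathbb R^m$; in particular $\mu$ is continuous on the non-empty compact set $K$.

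Next I would apply the extreme value theorem: a continuous real-valued function on a non-empty compact set attains its infimum, so there is a point $y_0\in K$ with $\mu(y_0)=\min_{y\in K}\mu(y)=:c$. By definition of $\mu$, for every $y\in K$ and every eigenvalue $\lambda$ of $\bold{G}(y)$ we have $\lambda\geq\mu(y)\geq c$, which says precisely that all eigenvalues of the symmetric matrix $\bold{G}(y)-c\bold{I}_t$ are non-negative, i.e. $\bold{G}(y)\succcurlyeq c\bold{I}_t$ for all $y\in K$. This proves the first assertion.

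For the second assertion, assume $\bold{G}(y)\succ 0$ for all $y\in K$, so that $\mu(y)>0$ for every $y\in K$. Then the minimum $c=\mu(y_0)$ constructed above is the value of $\mu$ at an actual point of $K$, hence $c=\mu(y_0)>0$; the same inequality $\bold{G}(y)\succcurlyeq c\bold{I}_t$ for all $y\in K$ then holds with this strictly positive $c$, completing the proof. The only point requiring any care is the continuity of $\mu$ on $K$, and this is exactly what the cited continuity result for eigenvalue functions supplies; the remainder is the standard compactness argument, so I expect no real obstacle.
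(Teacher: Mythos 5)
Your proof is correct and follows essentially the same route as the paper: invoke continuity of eigenvalue functions via \cite[Theorem 1]{Z}, then minimize over the compact set $K$. The only cosmetic difference is that you work directly with the single function $\mu=\lambda_{\min}$, whereas the paper takes the minimum over all $t$ eigenvalue functions $\lambda_i(\bold{G})$ and then over $K$; these yield the same constant $c$. You also explicitly justify $c>0$ in the positive-definite case (the minimum is attained at a point of $K$), a step the paper leaves implicit.
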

\begin{proof} Let $\lambda_{1}(\bold{G}), \cdots, \lambda_t(\bold{G})$ be (real-valued) eigenvalue functions of the polynomial matrix $\bold{G}\in \mathcal{S}_t(\mathbb R[Y]). $ It follows from \cite[Theorem 1]{Z} that $\lambda_i(\bold{G})$ are continuous functions. Since $K$ is compact, let 
$$c_i:= \min_{y\in K} \lambda_i(\bold{G})(y), \quad i=1,\cdots, t.$$
Denote $c:=\min_{i=1,\cdots,t} c_i$. Since eigenvalue functions of $\bold{G}-c\bold{I}_t$ are $\lambda_i(\bold{G})-c$, $i=1,\cdots,t$, it follows from the definition of $c$ that 
$$ \lambda_i(\bold{G})(y) - c \geq \lambda_i(\bold{G})(y) -c_i\geq 0$$
for all $y\in K$ and for all $i=1,\cdots,t$. This implies that 
$\bold{G}(y)\succcurlyeq   c\bold{I}_t, \mbox{ for all } y \in K.$
\end{proof}
\begin{itemize}
\item[(8)] Using the formula (\ref{pt:norm}) to find the number $C:=C(\overline{\bold{F}}^h)$.
\item[(9)] Find a number $N> \dfrac{d(d-1)}{2} \dfrac{C}{\lambda}-d$.
\item[(10)] Find the matrix coefficients of the polynomial matrix $(\sum_{i=1}^m Y_i)^N \overline{\bold{F}}^h\in \mathcal{S}_t(\mathbb R[Y])$, substituting $Y_i$ by $L_i(X)$, we obtain the desired representation for $\bold{F}$.
\end{itemize}

We illustrate the procedure given above by the following example which is computed explicitly using MATLAB Version 7.10 (Release 2010a) and its add-on GloptiPoly 3 discovered by Henrion, Lasserre and L\"ofberg \cite{HLL}. 

\begin{example}\rm  Let us consider the unit square centered at the origin 
$$P:=\{(x,y)\in \mathbb R^2| L'_1=1+x\geq 0, L'_2=1-x\geq 0, L'_3=1+y\geq 0, L'_4=1-y\geq 0\}. $$
Choosing $c_1=c_2=c_3=c_4=\dfrac{1}{4}$, we have $\sum_{i=1}^4c_iL'_i(x,y)=1$. Therefore, consider 
$$L_1:=\dfrac{1}{4}+\dfrac{1}{4}x, ~L_2:=\dfrac{1}{4}-\dfrac{1}{4}x,~ L_3:=\dfrac{1}{4}+\dfrac{1}{4}y, ~L_4:=\dfrac{1}{4}-\dfrac{1}{4}y \in \mathbb R[x,y],$$
 we have $ \sum_{i=1}^4 L_i = 1$.
 
 It is easy to see that the matrix $\bold{B}=\left[\begin{array}{cccc}
 2&-2&0&0\\
 0&0&2&-2
 \end{array}\right] $ satisfies the equation 
 $$\bold{B}\cdot [L_1 ~ L_2 ~ L_3 ~ L_4]^T = [x ~ y]^T.$$

Let $\varphi: \mathbb R[y_1,y_2,y_3,y_4] \rightarrow \mathbb R[x,y]$ be the ring homomorphism defined by $\varphi(y_i):=L_i(x,y)$, $i=1,2,3,4$. Using any monomial ordering in $\mathbb R[y_1,y_2,y_3,y_4]$ we can find a Gr\"{o}bner basis for the kernel $\mbox{Ker}(\varphi)$ of $\varphi$:
$$\{R_1, R_2\}:=\{y_1+y_2-\frac{1}{2}, y_3+y_4-\frac{1}{2}\}.$$
Consider $R:=R_1^2+R_2^2$.

Now we  consider the polynomial matrix $$\bold{F}:=\begin{bmatrix}
-4x^2y+7x^2+y+3&x^3+5xy-3x\\
x^3+5xy-3x&x^4+x^2y+3x^2-4y+6\\
\end{bmatrix}.$$
Eigenvalue functions of $\bold{F}$ are 
$$ \lambda_1(\bold{F})=6x^2 - 4x^2y - 4y + 6; ~  \lambda_2(\bold{F})=x^4 + x^2y + 4x^2 +y+ 3.$$
For any $(x,y)\in P$ we have $\lambda_i(\bold{F})(x,y)\geq 2$, $i=1,2$. Hence $\bold{F}(x,y)\succcurlyeq 2\bold{I}_2$ for every $(x,y)\in P$.\\
With the matrix $\bold{B}$ considered above, using the formula (\ref{pt:tilde}), we   find   $\widetilde{f_{ij}}$, $i,j=1,2$, and then we obtain  the polynomial matrix  $\widetilde{\bold{F}}=(\widetilde{f}_{ij})$. We can compute exactly the eigenvalue functions $\lambda_1(\widetilde{\bold{F}})$ and $\lambda_2(\widetilde{\bold{F}})$ of $\widetilde{\bold{F}}$ which satisfy  
 $$\min_{\Delta_4}\lambda_1(\widetilde{\bold{F}}) =1,~ \min_{\Delta_4}\lambda_2(\widetilde{\bold{F}}) =-2.$$
Moreover, $\displaystyle \min_{\Delta_4\cap \{\lambda_2(\widetilde{\bold{F}})\leq 0\}} R(y_1,y_2,y_3,y_4) = 0.125$.  Thus we can choose 
$$c>-\dfrac{-2}{0.125}=16, \mbox{ namely}, ~c=17,$$
 for which $\overline{\bold{F}}:=\widetilde{\bold{F}}+cR \bold{I}_2 \succ \bold{0} $ on $\Delta_4$.\\
Homogenizing  $\overline{\bold{F}}$ by $\sum\limits_{i=1}^4y_i$ we obtain a homogeneous polynomial matrix  $\overline{\bold{F}}^h=(\overline{f_{ij}}^h)$. Then we compute exactly the eigenvalue functions of the matrix $\overline{\bold{F}}^h$ which satisfy 
$$\min_{\Delta_4}\lambda_1(\overline{\bold{F}}^h) = 1.9706,~ \min_{\Delta_4}\lambda_2(\overline{\bold{F}}^h) = 1.5294.$$
It follows that $\overline{\bold{F}}^h\succcurlyeq  1.5294 ~  \bold{ I}_2$ on $\Delta_4$, and put $\lambda:=1.5294$.\\
Using the formula (\ref{pt:norm}), we can find the number $C:=C(\overline{\bold{F}}^h)=\dfrac{1044}{24}=\dfrac{87}{2}$. \\
Therefore,  choosing $N=167$, the polynomial matrix $(y_1+y_2+y_3+y_4)^{167}\overline{\bold{F}}^h$ has positive definite coefficients.\\
Find the matrix coefficients of the polynomial matrix $(y_1+y_2+y_3+y_4)^{167}\overline{\bold{F}}^h\in \mathcal{S}_t(\mathbb R[y_1,y_2,y_3,y_4])$, substituting $y_i$ by $L_i(x,y)$, we obtain the desired representation for $\bold{F}$.
\end{example}

As a consequence of Theorem \ref{thr:main}, we obtain the following matrix version  of Schm\"udgen's Positivstellensatz for convex, compact polyhedra.
\begin{corollary}
Let $P$,  $\bold{F}$, $\overline{\bold{F}}$, $\overline{\bold{F}}^h$ be given as above, with $\bold{F}$  positive definite on $P$. Assume that  $\overline{\bold{F}}^h\succcurlyeq \lambda \bold{I}_t$ on $\Delta_m$ for some $\lambda >0$. Let  $d:=\mbox{deg}(\overline{\bold{F}})$ and $C:=C(\overline{\bold{F}}^h)$.  Then for $N> \dfrac{d(d-1)}{2} \dfrac{C}{\lambda}-d$, $\bold{F}$ can be represented as
\begin{equation}\label{coro-equ-representation}
\bold{F}=\sum_{e=(e_1,\cdots,e_m)\in \{0,1\}^m}(\bold{F}_e^T \bold{F}_e) L_1^{e_1}\cdots L_m^{e_m},
\end{equation}
where   $\bold{F}_e\in \mathcal{M}_t(\mathbb R[X])$  and the degree of each sum of squares  $\bold{F}_e^T\bold{F}_e$ does not exceed $N+d$.
\end{corollary}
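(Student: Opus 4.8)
The plan is to derive the desired sum--of--squares certificate directly from the Handelman--type representation furnished by Theorem~\ref{thr:main}, via the elementary parity--splitting device that passes from a Handelman representation to a Schm\"udgen--type one in the scalar setting (exactly as in the Corollary to Theorem~\ref{thr-Handelman}). By Theorem~\ref{thr:main}, under the stated hypotheses we have, for every $N>\frac{d(d-1)}{2}\frac{C}{\lambda}-d$,
$$\bold{F}=\sum_{|\alpha|=N+d}\bold{F}_\alpha L_1^{\alpha_1}\cdots L_m^{\alpha_m},\qquad \bold{F}_\alpha\in\mathcal{S}_t(\mathbb{R})\ \text{positive definite}.$$
Since each $\bold{F}_\alpha$ is a constant positive definite symmetric matrix, it factors as $\bold{F}_\alpha=\bold{C}_\alpha^T\bold{C}_\alpha$ with $\bold{C}_\alpha\in\mathcal{M}_t(\mathbb{R})$ invertible --- e.g.\ by Cholesky factorization, or taking $\bold{C}_\alpha$ to be the symmetric positive definite square root $\bold{F}_\alpha^{1/2}$. (Positive semidefiniteness of the $\bold{F}_\alpha$ would already suffice for this step, with $\bold{C}_\alpha$ allowed to be singular.)

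Next I would split each exponent by parity: write $\alpha=2q(\alpha)+e(\alpha)$, where $e(\alpha)=(e_1(\alpha),\cdots,e_m(\alpha))\in\{0,1\}^m$ records the parities of $\alpha_1,\cdots,\alpha_m$ and $q(\alpha)=(\lfloor\alpha_1/2\rfloor,\cdots,\lfloor\alpha_m/2\rfloor)\in\mathbb{N}^m$. Then
$$L_1^{\alpha_1}\cdots L_m^{\alpha_m}=\big(L_1^{q_1(\alpha)}\cdots L_m^{q_m(\alpha)}\big)^2\,L_1^{e_1(\alpha)}\cdots L_m^{e_m(\alpha)},$$
and, setting $\bold{G}_\alpha:=\big(L_1^{q_1(\alpha)}\cdots L_m^{q_m(\alpha)}\big)\bold{C}_\alpha\in\mathcal{M}_t(\mathbb{R}[X])$ (the scalar factor commutes with $\bold{C}_\alpha$), each summand becomes $\bold{F}_\alpha L_1^{\alpha_1}\cdots L_m^{\alpha_m}=\bold{G}_\alpha^T\bold{G}_\alpha\,L_1^{e_1(\alpha)}\cdots L_m^{e_m(\alpha)}$. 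Regrouping the terms of the representation of $\bold{F}$ according to the common value $e\in\{0,1\}^m$ of $e(\alpha)$ gives
$$\bold{F}=\sum_{e\in\{0,1\}^m}\Big(\sum_{\substack{|\alpha|=N+d\\ e(\alpha)=e}}\bold{G}_\alpha^T\bold{G}_\alpha\Big)L_1^{e_1}\cdots L_m^{e_m}.$$
For each $e$ the inner sum is a sum of matrix squares; assembling the block matrix $\bold{F}_e$ by stacking the $\bold{G}_\alpha$ vertically (one block for each $\alpha$ with $e(\alpha)=e$) yields $\sum_\alpha\bold{G}_\alpha^T\bold{G}_\alpha=\bold{F}_e^T\bold{F}_e$, a single matrix square with $t$ columns, so that $\bold{F}_e^T\bold{F}_e\in\mathcal{S}_t(\mathbb{R}[X])$ and we arrive at \eqref{coro-equ-representation} (the blocks $e$ with $|e|\not\equiv N+d \pmod 2$ simply contribute an empty sum, i.e.\ $\bold{F}_e=\bold{0}$).

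It remains to track degrees. Since $|\alpha|=N+d$ and $e(\alpha)$ is the parity vector of $\alpha$, we have $2|q(\alpha)|=|\alpha|-|e(\alpha)|=N+d-|e|$, hence $\deg(\bold{G}_\alpha^T\bold{G}_\alpha)=\deg\big((L_1^{q_1(\alpha)}\cdots L_m^{q_m(\alpha)})^2\big)=N+d-|e|\le N+d$ for every $\alpha$ contributing to the block $e$; therefore $\deg(\bold{F}_e^T\bold{F}_e)\le N+d$ as claimed, and the product $\bold{F}_e^T\bold{F}_e\,L_1^{e_1}\cdots L_m^{e_m}$ has total degree exactly $N+d$. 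The only point requiring a little care --- and the reason $\bold{F}_e$ is produced by stacking rather than as a $t\times t$ matrix --- is that a sum of several matrix squares need not itself be the square of a square polynomial matrix (already $x^2+y^2$ is not a square in $\mathbb{R}[x,y]$); allowing $\bold{F}_e$ to have $t$ columns but possibly more rows removes this issue at no cost, since all that is used downstream is that $\bold{F}_e^T\bold{F}_e$ is positive semidefinite of controlled degree. Beyond Theorem~\ref{thr:main} the argument uses no analytic ingredient; it is essentially bookkeeping.
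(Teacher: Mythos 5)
Your argument is the same as the paper's: invoke Theorem~\ref{thr:main} to obtain $\bold{F}=\sum_{|\alpha|=N+d}\bold{F}_\alpha L^\alpha$ with $\bold{F}_\alpha\succ 0$ constant, factor $\bold{F}_\alpha=\bold{C}_\alpha^T\bold{C}_\alpha$, split each exponent $\alpha$ by parity to absorb $L^{2q(\alpha)}$ into the square, and regroup by parity vector $e\in\{0,1\}^m$. The paper records only the factorization step and says the rest ``follows directly''; you have made that explicit, including the degree bookkeeping, and both are correct up to one point.

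That one point is where you actually improve on the source. You observe that $\sum_{\alpha:\,e(\alpha)=e}\bold{G}_\alpha^T\bold{G}_\alpha$ is in general a \emph{sum} of hermitian squares and need not be a single square $\bold{F}_e^T\bold{F}_e$ with $\bold{F}_e\in\mathcal{M}_t(\mathbb{R}[X])$; your remedy is to stack the $\bold{G}_\alpha$'s into a tall matrix with $t$ columns. This is a genuine correction: already in the scalar case $t=1$ the coefficient of $L^e$ is a sum of squares like $x^2+y^2$ which is not a single polynomial square, so the statement's claim that $\bold{F}_e\in\mathcal{M}_t(\mathbb{R}[X])$ (square $t\times t$) cannot be taken literally. (The same slip appears in the scalar corollary to Theorem~\ref{thr-Handelman} in the introduction.) The clean fix is either to allow $\bold{F}_e$ to be $r_e\times t$ as you do, or to restate the coefficient as a sum of hermitian squares $\sum_i\bold{H}_{e,i}^T\bold{H}_{e,i}$ with $\bold{H}_{e,i}\in\mathcal{M}_t(\mathbb{R}[X])$; either way your proof establishes the corrected statement with the claimed degree bound $\deg(\bold{F}_e^T\bold{F}_e)\le N+d$.
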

\pf The proof follows directly from Theorem \ref{thr:main}, with the observation that any positive definite matrix $\bold{F_\alpha}\in \mathcal{S}_t(\mathbb R)$ can be written as 
$$ \bold{F_\alpha} = \bold{G_\alpha}^T \bold{G_\alpha}, $$
where  $ \bold{G_\alpha}\in \mathcal{M}_t(\mathbb R)$ is a non-singular matrix.

\epf 
\textbf{Acknowledgements} ~  The authors would like to thank the anonymous referees for their useful comments and suggestions. They would also like to   thank  Dr. Ngo Lam Xuan Chau for his fruitful discussion to compute the kernel of the homomorphism $M_\varphi$.  Both  authors are partially supported by  the Vietnam National Foundation for Science and Technology Development (NAFOSTED) under   grant number  101.01-2016.27.


\end{document}